\theoremstyle{plain}
\newtheorem{theorem}{Theorem}[section]
\newtheorem{theorem*}{Theorem*}[section]
\newtheorem{proposition}[theorem]{Proposition}
\newtheorem{definition}[theorem]{Definition}
\numberwithin{equation}{section}
\newcommand{\op}[1]{{\text{#1}}}
\newcommand{\mb}[1]{\mathbb{#1}}
\begin{document}
\title{A singular radial connection over $\mathbb B^5$ minimizing the Yang-Mills energy}
\author{Mircea Petrache\footnote{Forschungsinstitut f\"ur Mathematik, ETH Zentrum, CH-8093 Z\"urich, Switzerland.}}
\maketitle

\begin{abstract}
 We prove that the pullback of the $SU(n)$-soliton of Chern class $c_2=1$ over $\mathbb S^4$ via the radial projection $\pi:\mathbb B^5\setminus\{0\}\to\mathbb S^4$ minimizes the Yang-Mills energy under the fixed boundary trace constraint. In particular this shows that stationary Yang-Mills connections in high dimension can have singular sets of codimension $5$. 
\end{abstract}

\section{Introduction}
Let $G$ be acompact connected Lie group and $E\to M$ be a vector bundle associated to the adjoint representation of a principal $G$-bundle $P\to M$ over a compact Riemannian $n$-manifold $M$. Following \cite{DoKr} let $\nabla=d+A$ locally represent a connection over $E$ in a given trivialization and let the Lie algebra valued $2$-form representing the curvature of $\nabla$ be given by $F=dA+A\wedge A$. We recall that the Yang-Mills functional is defined in term of the $\op{ad}$-invariant norm $|\cdot|$ on $\mathfrak g$ by 
\[
 \mathcal{YM}(\nabla)=\int_M|F|^2d\op{vol}_M\ .
\]
Consider now the case $n=4, M=\mathbb S^4$ with the standard metric and $G=SU(n)$. We denote by
\[
 F_{\mathbb S^4}=dA_{\mathbb S^4} + A_{\mathbb S^4}\wedge A_{\mathbb S^4} 
\]
the instanton on $\mathbb S^4$ minimizing the Yang-Mills energy on associated $SU(n)$-bundles $E\to\mathbb S^4$ for the adjoint representation, under fixed second Chern number constraint $c_2(E)=1$:
\begin{equation*}
A\in\op{argmin}\left\{\int_{\mathbb S^4}|F_A|^2d\op{vol}_{\mathbb S^4}\left|\begin{array}{c} A\text{ is loc. }W^{1,2},\\ \frac{1}{8\pi^2}\int_{\mathbb S^4}\op{tr}(F_A\wedge F_A)=1\end{array}\right.\right\}\ .
\end{equation*}
The underlying minimization was studied in \cite{Uhl3}, where it was proved that the minimizer exists and the Chern class constraint is preserved under the underlying weak convergence of connections. It is well known (see \cite{DoKr}, \cite{FreedUh}) that in this case any minimizing curvature must be \textbf{anti-self-dual}. Indeed by Chern-Weil theory we may write
\begin{equation}\label{chernclass}
 c_2(E)=\frac{1}{8\pi^2}\int_{\mathbb S^4}\op{tr}(F_A\wedge F_A)=\frac{1}{8\pi^2}\int_{\mathbb S^4}\left(|F_A^-|^2 -|F_A^+|^2\right)d\op{vol}_{\mathbb S^4}\ .
\end{equation}
Recall that the space of $\mathfrak{su}(n)$-valued $2$-forms $\Omega^2$ splits into the $L^2$-orthogonal eigenspaces $\Omega^\pm$ of the Hodge star operator $\*:\Omega^2\to \Omega^2$ and thus $F_A$ splits as $F_A=F_A^++F_A^-$ with $*F_A^\pm=\pm F_A^\pm$. It follows then from equation \eqref{chernclass} that minimizers of $\mathcal{YM}$ are anti-self-dual and we have
\begin{equation}\label{asdnorm}
 \op{tr}(F\wedge F)=|F|^2\ .
\end{equation}

\subsection{Spaces of weak connections}
In \cite{Uhl2},\cite{Uhl3} the analytic study of Yang-Mills connections on bundles $E\to M^4$ over $4$-dimensional compact Riemannian manifolds $\mathbb M^4$ individuated the following atural space of $\mathfrak{su}(n)$-valued connection forms:
\[
 \mathcal A_{SU(n)}(M^4):=\left\{
\begin{array}{c}
 A\in L^2,\ F_A\stackrel{\mathcal D'}{=}dA+A\wedge A\in L^2\in L^2\ ,\\[3mm]
\text{and loc. }\exists\ g\in W^{1,2}(M^4,SU(n))\text{ s.t. }A^g\in W^{1,2}_{loc}
\end{array}
\right\}\ ,
\]
where $A^g:=g^{-1}dg+g^{-1}Ag$ is the formula representing the change of a connection form $A$ under a change of trivialization $g$.\\

For two $L^2$ connection forms $A,A'$ over $\mathbb B^5$ we write $A\sim A'$ if there exists a gauge change $g\in W^{1,2}(\mathbb B^5,SU(n))$ such that $A'=A^g$. The class of all such $L^2$ connection forms $A'$ is denoted $[A]$. In \cite{PR3} a class suited to the direct minimization of $\mathcal{YM}$ in $5$ dimensions was defined as follows:
\begin{equation*}
 \mathcal A_{SU(n)}(\mathbb B^5):=\left\{
 \begin{array}{c}
  [A]:\:A\in L^2,\ F_A\stackrel{\mathcal D'}{=}dA+A\wedge A\in L^2\\[3mm]
  \forall p\in\mathbb B^5\text{ a.e. }r>0,\:\exists A(r)\in\mathcal A_{SU(n)}(\partial B_r(p))\\[3mm]
  i^*_{\partial B_r(p)}A\sim A(r)
 \end{array}
 \right\}\ .
\end{equation*} 
 Let $\phi$ be a smooth $\mathfrak{su}(n)$-valued connection $1$-form over $\partial \mathbb B^5$. Recall from \cite{PR3} that $A_{SU(n)}(\mathbb B^5)$ is the strong $L^2$-closure of the following space of more regular connections:
 \[
 \mathcal R^{\infty}(\mathbb B^5):=\left\{
 \begin{array}{c}
  F\text{ corresponding to some }[A]\in \mathcal A_{SU(n)}(\mathbb B^5)\text{ s.t. }\\[3mm]
  \exists k,\exists a_1,\ldots,a_k\in \mathbb B^5,\quad F=F_\nabla \text{ for a smooth connection}\nabla\\[3mm]
  \text{on some smooth }SU(n)\text{-bundle }E\to \mathbb B^5\setminus\{a_1,\ldots,a_k\}
 \end{array}
\right\}\ .
\]
 In \cite{PR3} it was proved that the trace condition $i^*_{\partial \mathbb B^5}A\sim\phi$ can be formalized e.g. and the class $\mathcal A_{SU(n)}^\phi(\mathbb B^5)$ of weak connections with trace $\phi$ was introduced. A characterization of such class is as the strong closure of connection classes $[A]\in\mathcal R^\infty(\mathbb B^5)$ which satisfy $i^*_{\partial \mathbb B^5}A\sim\phi$. The main results of \cite{PR3} can be combined into the following theorem:
 \begin{theorem}[Main results of \cite{PR3}]\label{mtpr3}
The minimizer of 
\[
 \inf\left\{\|F_A\|_{L^2(\mathbb B^5)}:\:[A]\in\mathcal A_G^\phi(\mathbb B^5)\right\}
\]
exists and is smooth outside a set of isolated singular points.
 \end{theorem}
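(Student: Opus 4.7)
The plan is to prove existence by the direct method and then discreteness of the singular set by $\varepsilon$-regularity combined with a tangent-cone analysis.

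\emph{Existence.} Take a minimizing sequence $[A_k]\in\mathcal A_{SU(n)}^\phi(\mathbb B^5)$, so that the curvatures $F_{A_k}$ are uniformly bounded in $L^2(\mathbb B^5)$. By the radial coarea formula, at a.e.\ $r\in(0,1)$ the slice restrictions $i^*_{\partial B_r}A_k$ carry uniformly bounded $L^2$ curvature on $\partial B_r\cong\mathbb S^4$, and on each slice the $4$-dimensional Uhlenbeck gauge compactness of \cite{Uhl2,Uhl3} extracts, up to subsequence and slice-wise gauge, a weak $W^{1,2}$ limit preserving the second Chern class; the slice $r=1$ delivers the trace condition $i^*_{\partial\mathbb B^5}A_\infty\sim\phi$. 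The delicate step is to glue these slice-wise approximations into a single weak connection class $[A_\infty]$ lying in the strong $L^2$-closure $\mathcal A_{SU(n)}^\phi(\mathbb B^5)$ of $\mathcal R^\infty(\mathbb B^5)$. I would do this by a diagonal argument combining radial mollification with slice-wise Coulomb fixing away from a finite set of energy-concentration points, which appear as the admissible bubbling singularities of $\mathcal R^\infty$. Weak lower semicontinuity of $\|F\|_{L^2}^2$ under the resulting convergence then closes the existence argument.

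\emph{Partial regularity.} Any stationary Yang-Mills minimizer on $\mathbb B^5$ satisfies Price's monotonicity formula, so the scaled energy $r^{-1}\int_{B_r(x)}|F|^2$ is nondecreasing in $r$. Combined with an $\varepsilon$-regularity theorem in the spirit of Uhlenbeck--Nakajima--Tian (smallness of this scaled energy forces smoothness of a suitable gauge representative on a smaller concentric ball) and a Federer-type dimension-reduction argument on tangent connections, this yields $\dim_{\mathcal H}\op{Sing}(A)\leq 5-4=1$ for any stationary minimizer in the class.

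The \textbf{main obstacle} is to upgrade ``Hausdorff dimension at most $1$'' to ``isolated points''. Exploiting minimality, at a generic point $x_0$ of any putative $1$-dimensional stratum of $\op{Sing}(A)$ one blows up to a translation-invariant minimizing tangent of the form $\mathbb R\times C$, where $C$ is a dilation-invariant minimizing Yang-Mills connection on $\mathbb R^4$. Classical $4$-dimensional theory then forces $C$ to be trivial: Uhlenbeck's removable singularity theorem extends $C$ smoothly across the origin; minimality in dimension $4$ makes it (anti-)self-dual; and dilation invariance together with smoothness at $0$ forces the homogeneous-of-degree-$(-2)$ curvature $F_C$ to vanish identically. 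This contradicts the positive density $\Theta(x_0)\geq\varepsilon_0$ at any singular point guaranteed by $\varepsilon$-regularity, ruling out $1$-dimensional strata. The same uniform density lower bound then prevents accumulation of $0$-dimensional singularities, upgrading $\op{Sing}(A)$ to a discrete set.
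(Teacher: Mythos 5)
First, a point of orientation: the paper you are working from does not prove this statement at all. Theorem \ref{mtpr3} is quoted from \cite{PR3} (it is even labelled ``Main results of \cite{PR3}'') and is used as a black box in Section 3; there is no internal proof to compare yours against. What you have written is therefore a reconstruction of the argument of \cite{PR3}, and at the level of strategy it is the right one: direct method with slice-wise four-dimensional Uhlenbeck compactness for existence, then stationarity, Price monotonicity, $\varepsilon$-regularity and Federer dimension reduction for partial regularity, with the putative $1$-dimensional stratum excluded by classifying translation- and dilation-invariant tangents.

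As a proof, however, the proposal has two genuine gaps, and they sit exactly where the real content of \cite{PR3} lies. (1) The ``delicate step'' you defer --- gluing the slice-wise Uhlenbeck gauges into a single class $[A_\infty]$ that still belongs to the strong $L^2$-closure of $\mathcal R^\infty(\mathbb B^5)$ and still has trace $\sim\phi$ --- is the weak sequential closure theorem, the central technical result of \cite{PR3}. A ``diagonal argument combining radial mollification with slice-wise Coulomb fixing'' does not address the actual difficulty: the Coulomb gauges on the spheres $\partial B_r$ are each defined only up to a gauge transformation, and nothing a priori guarantees they can be chosen to depend measurably on $r$, let alone be patched into a single $W^{1,2}$ gauge in the bulk; controlling this requires the global gauge construction of \cite{PR3}. (2) The $\varepsilon$-regularity theorem you invoke ``in the spirit of Uhlenbeck--Nakajima--Tian'' is available in the literature for smooth or admissible stationary connections; for a minimizer in the weak class $\mathcal A^{\phi}_{SU(n)}(\mathbb B^5)$ one must first produce a controlled gauge in which the Yang--Mills system can be exploited, and this is itself a substantial theorem (building on Meyer--Rivi\`ere and Tao--Tian), not a citation. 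By contrast, your final step is essentially correct and can even be streamlined: a nontrivial dilation-invariant curvature on $\mathbb R^4$ satisfies $|F|\sim r^{-2}$ and hence has infinite $L^2$ energy near the origin, which already contradicts the local finiteness of the blow-up energy without any appeal to (anti-)self-duality; and the isolation of the remaining $0$-dimensional singularities does follow from the uniform density gap as you state.
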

A question which arised naturally is whether such result is optimal.\\

Indeed in \cite{Tian} a conjecture was formulated, according to which the singular set of $F$ would have Hausdorff dimension smaller than  $n-6$ in the case of so-called admissible $\Omega$-\textbf{anti-self-dual} curvatures $F$, i.e. curvatures satisfying $F=-\*\Omega\wedge F$ for a smooth closed $(n-4)$-form $\Omega$ in dimension $n$ under the further requirement of $F$ being admissible i.e. that the underlying connection be locally smooth outside of a $(n-4)$-dimensional rectifiable set. A natural question is to ask for examples of stationary or energy-minimizing connection classes which show that the $\Omega$-anti-self-dual requirement is necessary.\\

To the author's knowledge, in the literature no proof is available that stationary curvatures $F$ having a singular set of Hausdorff codimension greater or equal than $5$ exist. This situation is similar to the one taking place in the theory of harmonic maps precedently to R. Hardt, F.H. Lin and C.Y. Wang's celebrated paper \cite{HLW} where it was proved that the map $x/|x|:\mathbb B^3\to\mathbb S^2$ minimizes the $p$-th norm of the gradient among maps whose boundary trace is equal to the identity.\\

\subsection{Main result of the paper}
The main goal of this paper is to show that a result similar in spirit to \cite{HLW} holds for the case of Yang-Mills minimization in dimension $5$.
\begin{theorem}[Main result]\label{mainthm}
 Consider the connection form $A_{\mathbb S^4}$ and its pullback $A_{rad}:=\left(\frac{x}{|x|}\right)^*A_{\mathbb S^4}$. Then $A_{rad}$ is a minimzer for the problem
 \[
 \min\left\{\int_{\mathbb B^5}|F_A|^2d\op{vol}_{\mathbb B^5}:\ [A]\in\mathcal A_{SU(n)}(\mathbb B^5),\ [i^*_{\partial B^5}A]= [A_{\mathbb S^4}]\right\}\ .
 \]
\end{theorem}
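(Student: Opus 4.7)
My proof plan splits naturally into an easy upper bound and a hard lower bound. For the upper bound $\mathcal{YM}(A_{rad})\le 8\pi^2$: in polar coordinates $(r,\xi)\in(0,1]\times\mb S^4$ the pulled-back curvature $F_{rad}=\pi^\ast F_{\mb S^4}$ is $r$-independent and purely tangential, and the conformal covariance of the $4$D Yang--Mills integrand under the rescaling $\partial B_r\simeq r\,\mb S^4$ gives $\int_{\partial B_r}|F_{rad}|^2\,d\op{vol}_{\partial B_r}=\int_{\mb S^4}|F_{\mb S^4}|^2\,d\op{vol}=8\pi^2$ for every $r$, so Fubini yields $\mathcal{YM}(A_{rad})=8\pi^2$.

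For the matching lower bound I would first invoke the strong $L^2$-density of $\m R^\infty$ in $\m A_{SU(n)}^\phi(\mb B^5)$ from \cite{PR3} (together with lower semicontinuity of $\mathcal{YM}$) to reduce to competitors $A\in\m R^\infty$, i.e. $A$ smooth on $\mb B^5\setminus\{a_1,\dots,a_k\}$ with integer charges $n_i\in\mathbb Z$ satisfying $\sum_i n_i=c_2([A_{\mb S^4}])=1$. The engine of the bound is a calibration inequality. Since $\op{tr}(F\wedge F)$ is a closed $4$-form, Stokes on $\mb B^5\setminus\bigcup_i B_\varepsilon(a_i)$ in the limit $\varepsilon\to 0$ yields the topological identity
\[
\int_{\mb B^5} df\wedge\op{tr}(F\wedge F)\;=\;8\pi^2\Bigl(\int_{\mb S^4}f\,d\sigma-\sum_i n_i f(a_i)\Bigr),
\]
valid for any $f\in C^1(\overline{\mb B^5})$, where $\sigma:=(8\pi^2)^{-1}\op{tr}(F_{\mb S^4}\wedge F_{\mb S^4})$ is the Chern probability measure on $\mb S^4$. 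The pointwise algebraic bound $|\op{tr}(F\wedge F)|\le|F|^2$ (sharp precisely when $F$ is tangential and anti-self-dual, extended from the $4$D case via the orthogonal decomposition $F=F^\tau+dr\wedge F^\rho$) combined with Cauchy--Schwarz $|df\wedge\beta|\le|df|\cdot|\beta|$ gives $|df\wedge\op{tr}(F\wedge F)|\le|df|\,|F|^2\,d\op{vol}$. Integrating and taking the supremum over $1$-Lipschitz $f$ yields, via Kantorovich--Rubinstein duality,
\[
\mathcal{YM}(A)\;\ge\;8\pi^2\,W_1\Bigl(\sigma,\sum_i n_i\delta_{a_i}\Bigr).
\]

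To close the proof one must establish the sharp Wasserstein bound $W_1(\sigma,\sum_i n_i\delta_{a_i})\ge 1$. For the $SO(5)$-symmetric $A_{\mb S^4}$ the density $\sigma$ is uniform on $\mb S^4\subset\mathbb R^5$; a short Taylor expansion gives
\[
g(a):=\int_{\mb S^4}|y-a|\,d\sigma(y)\;=\;1+\tfrac{2}{5}|a|^2+O(|a|^3),
\]
so $g\ge 1$ on $\overline{\mb B^5}$ with equality only at $a=0$. This handles the single-atom case $\mu=\delta_a$ and pinpoints $A_{rad}$ as the unique extremal, by taking $f(x)=|x-a|$ in the calibration. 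The \textbf{main obstacle} is the extension to multi-atom \emph{signed} atomic measures $\mu=\sum_i n_i\delta_{a_i}$ with $n_i\in\mathbb Z$ and $\sum n_i=1$: cancellations between positive and negative charges can a priori pull $W_1$ below $1$, so generic $1$-Lipschitz test functions fail to close the gap. Closing it requires either a direct quantized-transport argument exploiting the integrality $n_i\in\mathbb Z$ and the constraint $\sum n_i=1$, or augmenting the global calibration with local Chern--Weil slicings around each $a_i$ providing contributions of order $8\pi^2|n_i|$, in the spirit of the slicing machinery of Hardt--Lin--Wang \cite{HLW}. This sharp Wasserstein step is the delicate part of the plan.
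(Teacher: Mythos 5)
Your framework is sound and in fact coincides, after unwinding, with the paper's: the upper bound computation is correct, the reduction to $\mathcal R^\infty$ competitors via density and lower semicontinuity is exactly what the paper does, and your calibration inequality $\mathcal{YM}(A)\ge 8\pi^2\sup_{\mathrm{Lip}(f)\le 1}\bigl(\int_{\mathbb S^4}f\,d\sigma-\sum_i n_i f(a_i)\bigr)$ is, by the Beckmann/minimal-flow formulation of $W_1$, equivalent to the paper's reduction to minimizing the $L^1$ norm of the vector field $X$ dual to $\op{tr}(F\wedge F)$ subject to $\op{div}X=\sum_i n_i\delta_{a_i}$ in the interior and unit normal flux on $\mathbb S^4$. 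Your single-atom analysis (convexity of $a\mapsto\int_{\mathbb S^4}|y-a|\,d\sigma$, minimum $1$ at $a=0$) is also correct and matches the paper's first-variation argument at the end of the proof of Theorem \ref{minvf}.

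However, the step you flag as "the main obstacle" — the sharp bound for signed integer configurations $\sum_i n_i\delta_{a_i}$ with $\sum_i n_i=1$ — is not a loose end to be patched; it is the entire content of Theorem \ref{minvf}, whose proof occupies most of the paper, and you do not supply it. Your worry is well founded: for configurations with many positive atoms spread near $\mathbb S^4$ and compensating negative atoms nearby, no single explicit Lipschitz test function (e.g.\ $f=|x|$, which yields only $O(\e)$) closes the gap, so the bound genuinely requires exploiting integrality. The paper's mechanism is the first of the two remedies you sketch: decompose the (acyclic) minimal flow into a measure on oriented arcs via Smirnov's theorem, pass to the associated finite weighted graph on the charges plus a boundary vertex, and use the maxflow--mincut theorem (Propositions \ref{satfl} and \ref{satbfl}) to extract a subflow of value exactly $1$ running from $\partial\mathbb B^5$ to a \emph{single} positive charge $a^+$ — the integrality of the $n_i$ enters precisely to rule out a mincut of value in $]0,1[$ via the identity $s^+=s^-$ in \eqref{zeroflow}. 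Only then does the reduction to $\fint_{\mathbb S^4}|x-a^+|\,d\mathcal H^4\ge 1$ apply. Without this (or an equivalent quantized-transport argument), your proof establishes the theorem only for competitors with a single singular point, so as written it has a genuine gap at the decisive step.
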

Note that for the above minimizer $[A]$ there holds 
\[
 d\left(\op{tr}(F_A\wedge F_A)\right)=8\pi^2\delta_0\ ,
 \]
i.e. the minimizer presents a topological singularity.\\

We note that the curvature form $F_{rad}:=F_{A_{rad}}$ of Theorem \ref{mainthm} is $\Omega$-anti-self-dual with respect to the radial $1$-form
\[
\Omega=\sum_{k=1}^5\frac{x_k}{|x|}dx_k=dr
\]
outside the origin. In other words we have $F_{rad}\wedge \Omega=-*F_{rad}$. The form $\Omega$ is closed in the sense of distributions, however it is not smooth. Therefore it does not fully enter the setting presented in \cite{Tian} and Conjecture 2 of \cite{Tian} remains still open.

\subsection{Minimizing $L^1$ vector fields with defects}
We note that to an $L^2$-integrable $\mathfrak{su}(n)$-values $2$-form $F$ defined on $\mathbb B^5$ we may associate an $L^1$ vector field $X$ by requiring the duality formula
\[
 \langle \phi,X\rangle=\langle \op{tr}(F\wedge F),*\phi\rangle=\int_{\mathbb B^5}\op{tr}(F\wedge F)\wedge \phi
\]
to hold for all smooth $1$-forms on $\overline{\mathbb B^5}$. Through the pointwise inequality 
\begin{equation}\label{i1}
 |\op{tr}(F\wedge F)|\leq |F|^2 
\end{equation}
we also deduce that 
\begin{equation}\label{i2}
 \|X\|_{L^1(\mb B^5)}=\|\op{tr}(F\wedge F)\|_{L^1(\mb B^5)}\leq \|F\|_{L^2(\mb B^5)}^2\ .
\end{equation}
Note that the curvature form $A_{rad}$ of Theorem \ref{mainthm} realizes the pointwise equality in \eqref{i1} and thus also in \eqref{i2}. We will deduce our main theorem from the following result, which is of independent interest.
\begin{theorem}\label{minvf}
 The vector field $X_{rad}(x)=|\mathbb S^4|^{-1}\frac{x}{|x|^5}$ minimizes the $L^1$-norm among vector fields $X\in L^1(\mathbb B^5,\mathbb R^5)$ which are locally smooth outside some finite subset $\Sigma\subset\mathbb B^5$, satisfy
 \begin{equation}\label{divx}
\op{div}X\llcorner\mathbb B^5=\sum_{x\in\Sigma}d_x\delta_x\ ,
 \end{equation}
for some integers $d_x$ and $X\cdot\nu_{\mathbb S^4}\equiv 1$ where $\nu$ is the interior normal vector field to $\mathbb S^4$.
\end{theorem}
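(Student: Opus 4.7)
My plan is to prove the sharp lower bound $\|X\|_{L^1(\mathbb B^5)}\geq 1$ by a slicing argument in which the integrality of the singular divergence enters through a pointwise ``distance to $\mathbb Z$'' inequality. The starting point is the Crofton-type identity
\[
|X(x)|\;=\;\frac{2}{|\mathbb S^3|}\int_{\mathbb S^4}|v\cdot X(x)|\,dv,
\]
so that after Fubini the problem reduces to an averaged-in-$v$ lower bound on $\|v\cdot X\|_{L^1(\mathbb B^5)}$. For each $v\in\mathbb S^4$ I would foliate $\mathbb B^5$ by affine hyperplanes $H_{t,v}=\{x\cdot v=t\}$, $t\in[-1,1]$, and use the triangle inequality $\int_{\mathbb B^5}|v\cdot X|\,dx\geq\int_{-1}^1|\Phi(t,v)|\,dt$ with $\Phi(t,v):=\int_{H_{t,v}\cap\mathbb B^5}v\cdot X\,d\mathcal H^4$.

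Applying the divergence theorem on $E_{t,v}:=\{x\cdot v>t\}\cap\mathbb B^5$, using the pointwise boundary constraint (constant normal component, of value $|\mathbb S^4|^{-1}$ under the normalization matching the stated $X_{rad}$) together with $\op{div}X=\sum_p d_p\delta_p$, one obtains, for a.e.\ $t$,
\[
\Phi(t,v)\;=\;\phi(t)\;-\;N(t,v),\qquad N(t,v):=\!\!\sum_{p\cdot v>t}\!\!d_p\in\mathbb Z,
\]
where $\phi(t)\in[0,1]$ is the normalized $\mathbb S^4$ spherical-cap area $|\{x\in\mathbb S^4:x\cdot v>t\}|/|\mathbb S^4|$, independent of $v$ by rotational symmetry. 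The decisive pointwise estimate is the trivial but crucial
\[
|\Phi(t,v)|\;\geq\;\mathrm{dist}(\phi(t),\mathbb Z)\;=\;\min\bigl(\phi(t),\,1-\phi(t)\bigr),
\]
which is where $d_p\in\mathbb Z$ is used. A direct calculation gives $\phi(t)=\tfrac12-\tfrac34 t+\tfrac14 t^3$, and the symmetry $\phi(-t)=1-\phi(t)$ yields $\int_{-1}^1\min(\phi,1-\phi)\,dt=2\int_0^1\phi(t)\,dt=\tfrac38$. Averaging over $v\in\mathbb S^4$ and using $|\mathbb S^4|/|\mathbb S^3|=4/3$ assembles to
\[
\|X\|_{L^1(\mathbb B^5)}\;\geq\;\frac{2}{|\mathbb S^3|}\cdot|\mathbb S^4|\cdot\frac{3}{8}\;=\;1,
\]
saturated by $X_{rad}$: the field $v\cdot X_{rad}$ has constant sign on each slice $H_{t,v}$ (tightening the triangle inequality), and the single degree-one singularity at the origin makes $N(t,v)=\mathbf 1_{\{t<0\}}$, the closest integer to $\phi(t)$.

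The main obstacle I anticipate is the rigorous application of the divergence theorem on $E_{t,v}$ despite the distributional Dirac divergence of $X$: for a.e.\ $t$ neither $H_{t,v}$ nor the spherical cap meets the finite set $\Sigma$, and for such $t$ the identity $\Phi(t,v)=\phi(t)-N(t,v)$ should follow by excising small balls around each $p\in\Sigma\cap E_{t,v}$ and passing to the limit, using smoothness of $X$ on $\mathbb B^5\setminus\Sigma$ together with $X\in L^1(\mathbb B^5)$.
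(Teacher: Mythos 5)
Your proposal is correct, and the constants all check out: $\int_{\mathbb S^4}|v\cdot u|\,dv=2|\mathbb B^4|=|\mathbb S^3|/2$ for $|u|=1$, the cap function is $\phi(t)=\tfrac12-\tfrac34t+\tfrac14t^3$ with $2\int_0^1\phi\,dt=\tfrac38$, and $\tfrac{2}{|\mathbb S^3|}\,|\mathbb S^4|\cdot\tfrac38=\tfrac83\cdot\tfrac38=1=\|X_{rad}\|_{L^1(\mathbb B^5)}$ (under the normalization $X\cdot n\equiv|\mathbb S^4|^{-1}$ for the outward normal, which is the reading of the boundary condition consistent with $X_{rad}$; the literal ``$\equiv 1$'' in the statement is a normalization slip of the paper). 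However, your route is genuinely different from the paper's. The paper observes that a minimizer must be acyclic, decomposes it via Smirnov's theorem into a finite measure $\mu$ on oriented arcs joining the charges and the boundary, and then runs a combinatorial argument ($X$-graphs, $\bar X$-graphs and the maxflow--mincut theorem, Propositions \ref{satfl} and \ref{satbfl}) to extract from $\mu$ a unit-value flow routing the entire boundary flux to a single positive charge $a^+$; integrality of the $d_x$ enters there by forcing a mincut of value in $]0,1[$ to split evenly ($s^+=s^-$ in \eqref{zeroflow}), and the proof concludes with $\int|X|\ge\fint_{\mathbb S^4}|x-a^+|\,d\mathcal H^4\ge 1$, the sink being optimally placed at the origin by a first-variation argument. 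You bypass all of this with Cauchy--Crofton plus slicing by hyperplanes, letting integrality enter through $N(t,v)\in\mathbb Z$ and the distance-to-$\mathbb Z$ inequality $|\phi(t)-N(t,v)|\ge\min(\phi(t),1-\phi(t))$. Your proof is shorter and more elementary (no Smirnov decomposition, no combinatorics), and local smoothness outside $\Sigma$ is used only to justify the divergence theorem on $E_{t,v}$ --- the one delicate point, which you correctly flag and for which your sketch is the standard complete fix: off $\Sigma$ the field is classically divergence-free, so the flux through $\partial B_\epsilon(p)$ is independent of $\epsilon$ and equals $d_p$ by testing the distributional divergence against a cutoff. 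What the paper's machinery buys instead is structural information about minimizers (the arcs of an optimal configuration can be rerouted without increasing length to a single sink), a template that powers the partial-regularity analysis of \cite{P3}, and a final step that would adapt to non-symmetric boundary densities by minimizing $a\mapsto\int|x-a|\,d\nu(x)$, whereas your argument exploits the exact rotational symmetry (the $v$-independence of $\phi$ and the precise value $\tfrac38$) in an essential way.
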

This result is proved using similar tools as in \cite{P3}, i.e. Smirnov's decomposition for $1$-currents \cite{smirnov} and a combinatorial result based on the maxflow-mincut theorem.

\textbf{Acknowledgements.} I whish to thank Tristan Rivi\`ere and Gang Tian for interesting discussions from which the question underlying the present work emerged. 

\section{Proof of Theorem \ref{minvf}}
\subsection{Smirnov's decomposition and combinatorial reduction}\label{secsmi}
We start by recalling a version of Smirnov's result \cite{smirnov}, which allows to reduce the larger step of the prof of Theorem \ref{minvf} to a combinatorial argument. We formulate the result in the case of vector fields with divergence of special form as in Theorem \ref{minvf}.\\

We recall the following definitions and notations:
\begin{itemize}
 \item An \textbf{arc} in $\overline{\mathbb B^5}$ is a rectifiable curve which has an injective parameterization $\gamma:[0,1]\to\overline{\mathbb B^5}$. To an arc we may associate a continuous linear functional on smooth $1$-forms $\alpha$ given via $\langle [\gamma],\alpha\rangle:=\int_\gamma\omega$. We also call an arc the functional $[\gamma]$.
 \item The space of all arcs $[\gamma]$ is topologized with the weak topology. Note that the functions $s([\gamma]), e([\gamma])$ which to an arc assign its starting and ending point respectively, are Borel measurable. The variation measure of such $[\gamma]$ is denoted by $\|\gamma\|$ and its total variation is the lenght $\|\gamma\|(\overline{\mathbb B^5})=\ell(\gamma)$. The boundary $\partial[\gamma]$ is given by the difference of Dirac masses $\delta_{e([\gamma])}-\delta_{s([\gamma]}$. and its variation measure is $\delta_{e([\gamma])}+\delta_{s([\gamma]}$.
 \item We say that two vector fields $B,C$ with divergences of finite total variation \textbf{decompose} a vector field $A$ if $A=B+C$, $|A|=|B|+|C|$. We say that a vector field $X$ is \textbf{acyclic} if for each such decomposition with $\partial C=0$ there holds $C=0$. Note that any minimizer as in Theorem \ref{minvf} must be acyclic since if we had a decomposition $X=B+C$ as above with $\partial C=0, C\neq 0$ then $B$ would be a competitor to $X$ of strictly smaller $L^1$ norm.
\end{itemize}

\begin{theorem}[Decomposition of vector fields, \cite{smirnov}]\label{smi}
 Assume $X$ is an acyclic $L^1$ vector field over $\overline{\mathbb B^5}$ such that $\op{div}X$ is a measure of finite total variation. We may then find a finite Borel measure over the space of arcs such that the following hold for all smooth $1$-forms $\alpha$ and for all smooth functions $f$ over $\overline{\mathbb B^5}$:
 \begin{eqnarray}
 \langle X,\alpha\rangle&=&\int \langle[\gamma],\alpha\rangle d\mu(\gamma)\ ,\label{s1}\\
 \langle |X|,f\rangle &=&\int \langle \|\gamma\|,f\rangle d\mu(\gamma)\ ,\label{s2}\\
 \langle \op{div}X,f\rangle&=&\int \langle\delta_{e([\gamma])}-\delta_{s([\gamma]}, f\rangle d\mu(\gamma)\ ,\label{s3}\\
 \langle \|\op{div}X\|, f\rangle&=&\int\langle \delta_{e([\gamma])}+\delta_{s([\gamma]},f\rangle d\mu(\gamma)\ .\label{s4}
 \end{eqnarray}
\end{theorem}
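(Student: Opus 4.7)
The plan is to translate the statement into Federer's language of normal $1$-currents on $\overline{\mathbb B^5}$ and then invoke Smirnov's original result from \cite{smirnov}. First I would associate to $X$ the $1$-current $T_X$ defined by $\langle T_X,\alpha\rangle=\int_{\mathbb B^5}\langle X,\alpha^\sharp\rangle\,d\mathcal L^5$ for smooth $1$-forms $\alpha$, where $\alpha^\sharp$ is the vector field dual to $\alpha$. The hypotheses that $X\in L^1$ and that $\op{div}X$ is a finite measure then translate into $T_X$ being a normal current satisfying $\mathbf M(T_X)=\|X\|_{L^1}$ and $\mathbf M(\partial T_X)=\|\op{div}X\|_{TV}$, while the pointwise condition $|A|=|B|+|C|$ corresponds to mass-additivity of the associated decomposition $T_A=T_B+T_C$.

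Next I would verify that the acyclicity notion in the statement coincides with Smirnov's notion for $1$-currents, namely that $T_X$ admits no nontrivial mass-additive decomposition $T_X=S+C$ with $\partial C=0$. A pointwise decomposition $X=B+C$ with $\op{div}C=0$ produces such a current decomposition, and conversely any mass-additive current decomposition with boundary-free summand is realised by a pointwise splitting of $X$ obtained from the polar decomposition, so the two notions match. With acyclicity in hand, Smirnov's theorem yields a finite Borel measure $\mu$ on the space of injective rectifiable arcs such that $T_X=\int [\gamma]\,d\mu(\gamma)$, and moreover the mass and boundary-mass measures disintegrate as $|T_X|=\int \|\gamma\|\,d\mu(\gamma)$ and $\|\partial T_X\|=\int(\delta_{e([\gamma])}+\delta_{s([\gamma])})\,d\mu(\gamma)$. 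Testing against smooth $1$-forms $\alpha$ or functions $f$ then produces \eqref{s1} and \eqref{s2} directly, and \eqref{s3}, \eqref{s4} follow by taking the boundary of the identity \eqref{s1}.

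The principal obstacle is ensuring that $\mu$ concentrates on \emph{injective} arcs as required here, rather than on merely Lipschitz curves that could form loops or self-intersect. This is handled by the loop-removal step in Smirnov's construction: acyclicity guarantees that the iterative trimming of subloops can be carried out without increasing the total mass, so the output measure is supported on injective rectifiable curves. A secondary technical point is checking that the pointwise notion of mass-additivity used in the text matches the current-theoretic one used by Smirnov; this is a routine identification based on the fact that $L^1$ vector fields correspond precisely to normal $1$-currents absolutely continuous with respect to $\mathcal L^5$, and their total variation measures coincide with the mass measures of the associated currents.
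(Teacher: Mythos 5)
This theorem is quoted from \cite{smirnov} and the paper gives no proof of its own, so your reduction to Smirnov's decomposition of acyclic normal $1$-currents into simple arcs --- via the standard identification of an $L^1$ vector field with finite-mass distributional divergence with a normal $1$-current absolutely continuous with respect to $\mathcal L^5$, and the routine matching of the two acyclicity notions --- is exactly the intended justification and is correct. One small imprecision: \eqref{s4} does not ``follow by taking the boundary of \eqref{s1}'' (only \eqref{s3} does); the cancellation-free decomposition of $\|\op{div}X\|$ is a separate clause of Smirnov's theorem for acyclic currents, which you do in fact list correctly among its outputs, so nothing is missing in substance.
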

In other words the vector field $X$ decomposes as a superposition of arcs without cancellations whose boundaries decompose $\op{div}X$ without cancellations. See Figure \ref{curves}.

\begin{figure}[h]
\centering{
\resizebox{0.4\textwidth}{!}{\includegraphics{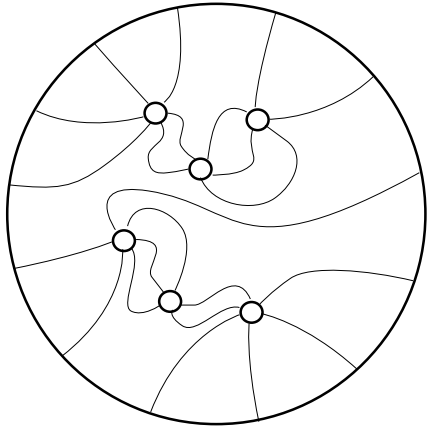}}}
\caption{\emph{We present here some of the arcs in the Smirnov decomposition of a vector field $X$ as in Theorem \ref{minvf}, including some of the charges and some of the curves in the support of $\mu$ as in Theorem \ref{smi}. Note that the arcs are actually oriented.}}\label{curves}
\end{figure}

Note that for a vector field $X$ as in Theorem \ref{minvf} for $\mu$-a.e. arc $\gamma$ the starting point $s([\gamma])$ is in one of the points $x$ such that $d_x<0$ in the expression of $\op{div}X\llcorner\mathbb B^5$ and the end point $e([\gamma])$ is either in one of the points $x$ with $d_x>0$ or on the boundary $\partial B^5$. We may then consider as in \cite{P3} the Borel sets of the form
\begin{equation}\label{posdir}
 C_{x,y}=\{[\gamma]\ \text{ such that } s([\gamma])=x, e([\gamma])=y\} \text{with }d_x<0,d_y>0
\end{equation}
and 
\begin{equation}\label{posdirbdry}
 C_{\partial,y}=\{[\gamma]\ \text{ such that } s([\gamma])\in\partial \mathbb B^5, e([\gamma])=y\},\quad \text{with }d_y>0\ ,
\end{equation}
where $d_x$ are the integers appearing in \eqref{divx}. We observe that the above sets $C_{*,*}$ form a finite partition of $\mu$-almost all of $\op{spt}(\mu)$ and that given a function 
\[
\alpha:\{C_{x,y}\}\cup\{C_{\partial,y}\}\to[-1,1]
\]
the measure 
\begin{equation}\label{muf}
  \mu_\alpha:=\sum_{x,y}\alpha(C_{x,y})\mu\llcorner C_{x,y} + \sum_x\alpha(C_{\partial,y})\mu\llcorner C_{\partial,y}
\end{equation}
also gives an $L^1$ vector field $X_\alpha$ with finite variation divergence which is defined via an equation like \eqref{s1} and saisfies \eqref{s2}, \eqref{s3} but not necessarily \eqref{s4}. The measure $\mu_{X_\alpha}$ obtained applying Theorem \ref{smi} to $X_\alpha$ is supported on curves which are concatenations of curves in the support of $\mu_\alpha$.

\subsection{Combinatorial results}
We now fix the notations for a combinatorial structure which will be associated to data $X, \mu$ given above.
\begin{definition}[$X$-graph]\label{defgr}
 Consider a finite set of vertices $V$ of which a special vertex $\partial\in V$ is highlited, a graph on $V$, i.e. a subset $E\subset V\times V$ such that $(a,b)\in E\Rightarrow (b,a)\in E$, and a weight function $w:E\to\mathbb R$ such that $w(a,b)=-w(b,a)$ unless $a=b=\partial$, in which case we require $w(\partial,\partial)\geq0$. We call such data $(V,E,w,\partial)$ an \textbf{$X$-graph} if
 \begin{enumerate}
  \item We have integer fluxes: $\forall v\in V\setminus\{\partial\},\ fl(v):=\sum_aw(a,v)\in\mathbb Z$, where the sum is taken over all $a\in V$ such that $(a,v)\in E$.
  \item $fl(\partial):=\sum\{w(a,\partial):\ (a,\partial)\in E\setminus(V\times\{\partial\})\}=-1$.
  \item For all $v\in V\setminus\{\partial\}$ we have that all terms in the sum defining $fl(v)$ have the same sign.
 \end{enumerate}
 We denote $V^+$ the vertices for which the sign in the last point is positive and $V^-$ those for which it is negative. Given $x,y\in V$ such that $(x,y), (y,x)\in E$ we say that $(x,y)$ is a \textbf{directed edge} if $w(x,y)>0$.
\end{definition}
We associate an $X$-graph to a vector field $X$ as in Theorem \ref{minvf} by defining
\begin{eqnarray*}
 V&:=&\{x:d_x\neq 0\}\cup\{\partial\}\ ,\\
 E&:=&\{(x,y), (y,x), (\partial,y), (y,\partial):\ C_{x,y}, C_{\partial,y}\text{ are as in \eqref{posdir},\eqref{posdirbdry}}\}
\end{eqnarray*}
and for $*\in V$
\[
  w(*,y):=\mu(C_{*,y}),\ w(y,*):=-\mu(C_{*,y})\quad\text{ if }C_{*,y}\text{ appears in \eqref{posdir} or \eqref{posdirbdry}}\ .
\]
We leave the verification of the properties as in Definition \ref{defgr} to the reader. Note that in this case we obtain property (3) of Definition \ref{defgr} also for $v=\partial$ and we have $\partial\in V^-, w(\partial,\partial)=0$. We also need the following definition which is a modification of that of an $X$-graph.
\begin{definition}[$\bar X$-graph]\label{defbgr}
Consider $(V,E,w, \bar\partial)$ as in Definition \ref{defgr} except that $\bar\partial \subset V$ may now contain more than one vertex, and that we require $w(a,b)=-w(b,a)$ for all edges $(a,b)\in E$ with no exception. We say that $(V,E,w,\bar\partial)$ form an $\bar X$-graph if
\begin{enumerate}
 \item $\forall v\in V\setminus\bar\partial$, $fl(v)\in\mathbb Z$.
 \item $\sum_{v\in\bar\partial} fl(v)=0$.
 \item For $v\in V\setminus\bar \partial$ all terms in the sum defining $fl(v)$ have the same sign.
\end{enumerate}
\end{definition}
Note that as a consequence of the fact that $w(a,b)=-w(b,a)$ even for $a,b\in\bar\partial$ it follows that a $\bar X$-graph has no loops, unlike $X$-graphs who were allowed to have loops on the boundary.

A tool in our combinatorial construction will be the maxflow-mincut theorem, to state which we recall the definition of a (combinatorial) flow.
\begin{definition}[$X$-flows, $\bar X$-flows and cuts]\label{cflow}
 Let $(V,E,w,\partial)$ be an $X$-graph and fix a vertex $a^+\in V^+$. A function $f:E\to\mathbb R$ such that $f(a,b)=-f(b,a)$ for $(a,b)\neq (\partial,\partial)$ and $f(\partial,\partial)\geq 0$ is a \textbf{$X$-flow} if the following properties hold:
 \begin{enumerate}
  \item $|f(a,b)|\leq |w(a,b)|$ for all $(a,b)\in E$. 
  \item For all $v\in V\setminus\{\partial,A^+\}$ there holds $\sum\{f(a,v):\ (a,v)\in E\}=0$.
  \item For all $(x, \partial),(a^+,y)\in E$ there holds $\op{sgn}(f(x,\partial))=\op{sgn}(w(x,\partial))$ and $\op{sgn}(f(a^+,y))=\op{sgn}(w(a^+,y))$.
 \end{enumerate}
We call the vertex $a^+$ the \textbf{sink} of the $X$-flow $f$. The \textbf{value of the $X$-flow} $f$ is by definition the number $val(f):=\sum\{f(\partial,y):\ (\partial, y)\in E, y\neq\partial\}$. We say that the edge $(a,b)$ is \textbf{saturated} by $f$ if equality holds in point 1. above. If all edges with an end equal to $v\in V$ are saturated, we say that $f$ \textbf{saturates $v$}.\\

If $(V, E, w,\bar\partial)$ is a $\bar X$-graph then we define a \textbf{$\bar X$-flow} as above, except that $f(a,b)=-f(b,a)$ will be required to hold for all edges $(a,b)$ with no exception.\\

For a given vertex $a^+\in V^+$ a \textbf{cut between $\partial$ and $a^+$} of the $X$-graph $(V, E,w,\partial)$ is a subset $S\subset E$ such that for every path $(\partial:=a_0,a_1)$, $(a_1,a_2)$, $\ldots,$ $(a_k,a_{k+1}:=a^+)$ such that $(a_i,a_{i+1})\in E$ for all $i=0,\ldots,k$ there exists an index $i$ such that either $(a_i, a_{i+1})\in S$ or $(a_{i+1},a_i)\in S$. The \textbf{ value of a cut $S$} is by definition the number $val(S):=\sum\{|w(x,y)|:\ (x,y)\in S\}$.\\

We say that a $X$-flow (or a $\bar X$-flow) $f$ \textbf{saturates the cut} $S$ if it saturates all edges of $S$.
\end{definition}
Note that for a $X$-flow the following are equivalent: (a) $f$ saturates $\partial$; (b) $f=w$ on all edges with an end in $\partial$; (c) $f$ has value $1$.
Our main combinatorial result is the following:
\begin{proposition}[existence of a saturating $X$-flow]\label{satfl}
 Let $(V, E, w,\partial)$ be an $X$-graph. Then we may find a $X$-flow $f$ which saturates $\partial$.
\end{proposition}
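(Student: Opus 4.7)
My plan is to apply max-flow/min-cut in an augmented flow network and then to concentrate the resulting multi-sink flow onto a single sink $a^+\in V^+$.

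First, I augment the $X$-graph by introducing a super-sink $t$ and an edge $(v,t)$ of infinite capacity for each $v\in V^+$. In the resulting capacitated network (with source $\partial$, sink $t$, and edge capacities $|w(x,y)|$), any cut $(A,(V\cup\{t\})\setminus A)$ of finite value with $\partial\in A$ and $t\notin A$ must satisfy $A\cap V^+=\emptyset$, since otherwise an infinite-capacity edge $(v,t)$ with $v\in A\cap V^+$ would cross the cut. For such $A$, the antisymmetry of $w$ off the $(\partial,\partial)$-loop yields
\[
\sum_{x\in A}fl(x)\;=\;-\sum_{(x,y)\in E,\ x\in A,\ y\notin A}w(x,y).
\]
Combined with $fl(\partial)=-1$ and $fl(v)\leq 0$ for $v\in V^-$, this forces $\sum_{x\in A}fl(x)\leq -1$, so the cut value $\sum_{(x,y)\text{ in cut}}|w(x,y)|$ is at least $|\sum_{x\in A}fl(x)|\geq 1$. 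By max-flow/min-cut, the maximum flow from $\partial$ to $t$ is at least $1$.

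Second, I upgrade this multi-sink flow to an $X$-flow with a single sink $a^+$. Given a max flow $F$ of value $1$ from $\partial$ to $t$, I decompose $F$ into path flows $\partial\to\cdots\to v_i\to t$ with weights $\lambda_i\in\mathbb R_{>0}$ summing to $1$. In the easy case some $\lambda_{i_0}\geq 1$, and we take $a^+=v_{i_0}$ after truncating the $(v_{i_0},t)$-segment, producing a saturating $X$-flow. Otherwise the unit is strictly split among several $v\in V^+$, and I exchange flow across sinks by finding augmenting cycles in the residual graph that route through $V^-$-transit vertices (using the bipartite-like structure of the $X$-graph, in which edges connect $V^-\cup\{\partial\}$ to $V^+$). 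Each exchange reduces the number of $V^+$-vertices absorbing positive flow, and the integrality of $fl(v)$ for $v\in V^+$ guarantees that the consolidation terminates at a single $a^+$.

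The main obstacle I expect is the consolidation step, which requires establishing the existence of residual paths between any two sinks $v_0,v_1\in V^+$ in the residual graph. A clean way to close it is by contradiction: if no $a^+\in V^+$ admits a saturating $X$-flow, then for every $a^+$ the minimum cut separating $\partial$ from $a^+$ has value strictly less than $1$, and by the bound used above such a cut $A_{a^+}$ must contain some other $v^+\in V^+$ on its source side. An uncrossing argument applied to the resulting family $\{A_{a^+}\}_{a^+\in V^+}$, combined with submodularity of the cut function and the integer flux condition, is then used to contradict the min-cut estimate from the first step and thus force the existence of a single $a^+$ realizing max flow at least $1$.
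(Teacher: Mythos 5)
Your Step 1 is sound: the super-sink construction plus the flux computation on a source side $A$ with $A\cap V^+=\emptyset$ correctly shows that every such cut has value at least $1$, hence a flow of value $1$ from $\partial$ into $V^+$ exists. This is essentially the easy half of the problem, and it is also where the paper starts (a direct appeal to max-flow/min-cut). The difficulty of Proposition \ref{satfl} is entirely in your Step 2 — forcing the whole unit of flow into a \emph{single} vertex $a^+\in V^+$ — and there your proposal has a genuine gap that you yourself flag as ``the main obstacle'' without closing it. Neither the ``augmenting cycles through $V^-$-transit vertices'' device nor the ``uncrossing plus submodularity'' contradiction is carried out, and neither works as stated. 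For the contradiction route: if for every $a^+$ the min cut $A_{a^+}$ separating $\partial$ from $a^+$ has value $<1$, then each $A_{a^+}$ must contain some other vertex of $V^+$, but intersecting the family $\{A_{a^+}\}$ to produce a $V^+$-free source side only gives, via submodularity, a cut value bounded by the \emph{sum} of the individual values, which can be as large as $\#V^+$; this does not contradict your Step 1 estimate. Some use of the integer-flux hypothesis is unavoidable here — without it the statement is false (take $w(\partial,v_1)=w(\partial,v_2)=1/2$ with no other edges and $fl(v_i)=1/2$: no single sink admits a value-$1$ flow) — and your sketch never says where integrality actually enters.

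The paper's proof supplies precisely the missing mechanism. It inducts on $\#V$: picking a sink $a^+$ and a minimum cut $S$ of value in $]0,1[$, the integer-flux condition forces the positive and negative weight contributions across $S$ to cancel exactly ($s^+=s^-$ in \eqref{zeroflow}); this balance is what allows the far side of the cut to be contracted into the boundary vertex while preserving $fl(\partial)=-1$, so the inductive hypothesis applies to the $\partial$-side, and the flow is then extended back across $S$ using the auxiliary result on $\bar X$-graphs (Proposition \ref{satbfl}), which has its own induction and its own integrality argument. Nothing in your proposal replaces this. Two smaller points: (i) a generic max flow in the capacitated network need not satisfy condition 3 of Definition \ref{cflow} (sign agreement of $f$ with $w$ on edges at $\partial$ and at $a^+$), so even after consolidation you would need to check these constraints; (ii) your ``easy case'' $\lambda_{i_0}\geq 1$ means the entire unit travels along one path, which essentially never happens and does not simplify the problem, since saturating $\partial$ requires using \emph{all} boundary edges at full weight. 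As it stands the proposal establishes only the multi-sink statement and leaves the actual content of the proposition unproved.
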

We will need the following result present in \cite{P3}, of which we present a different proof:
\begin{proposition}[existence of a saturating $\bar X$-flow, \cite{P3}]\label{satbfl}
 Let $(V'', E'', w'',\bar\partial)$ be a $\bar X$-graph with the bound
 \begin{equation}\label{smallflux}
  \sum\left\{|w''(a,b)|:\ a\in\bar\partial,\ (a,b)\in E\right\}<1\ .
 \end{equation}
Then there exists a $\bar X$-flow $f''$ saturating $\bar\partial$.
\end{proposition}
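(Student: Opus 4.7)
I would translate the saturation requirement into a feasibility problem for a network flow on the interior subgraph and then apply Hoffman's circulation theorem, verifying the resulting cut inequality with the aid of integrality of interior fluxes and the smallness bound \eqref{smallflux}. Concretely, I first set $f''(a,b):=w''(a,b)$ on every edge having an endpoint in $\bar\partial$; antisymmetry and the sign condition make this the only admissible choice. At each interior vertex $v\in V''\setminus\bar\partial$, flow conservation then forces the interior flow to satisfy $\sum_{u\notin\bar\partial}f''(u,v)=-B(v)$, where $B(v):=\sum_{a\in\bar\partial}w''(a,v)$. Property (2) of Definition~\ref{defbgr} together with antisymmetry of $w''$ within $\bar\partial$ yields $\sum_{v\notin\bar\partial}B(v)=0$, so the demands balance, and Hoffman's theorem reduces the existence of a feasible interior flow of capacity $|w''|$ to the inequality
\[
|W_{\bar\partial}(A)|\leq C(A)\qquad\text{for every }A\subset V''\setminus\bar\partial,
\]
where $W_{\bar\partial}(A):=\sum_{v\in A}B(v)$ and $C(A):=\sum_{u\in V''\setminus(\bar\partial\cup A),\,v\in A}|w''(u,v)|$ is the cut capacity.

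To verify this, I use that the sign condition (3) of Definition~\ref{defbgr} makes the interior subgraph bipartite between $V^+\setminus\bar\partial$ and $V^-\setminus\bar\partial$ and forces $|fl(v)|=\sum_u|w''(u,v)|$ for each interior $v$. Writing $A=A^+\sqcup A^-$ with $A^\pm=A\cap V^\pm$ and expanding $N^\pm:=\sum_{v\in A^\pm}|fl(v)|$ according to the origin of each incident edge gives
\[
C(A)=N^++|N^-|-W_{\bar\partial}(A^+)-|W_{\bar\partial}(A^-)|-2D(A),
\]
with $D(A):=\sum_{u\in A^-,\,v\in A^+}w''(u,v)\geq 0$. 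Non-negativity of the two halves of $C(A)$ in turn yields $D(A)\leq N^+-W_{\bar\partial}(A^+)$ and $D(A)\leq|N^-|-|W_{\bar\partial}(A^-)|$; selecting whichever of these is sharper gives
\[
C(A)\geq\bigl|(N^+-|N^-|)-(W_{\bar\partial}(A^+)-|W_{\bar\partial}(A^-)|)\bigr|.
\]

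The main obstacle—and the point where hypothesis \eqref{smallflux} enters decisively—is the final arithmetic inequality. Setting $n:=N^+-|N^-|\in\mathbb Z$ and $X:=W_{\bar\partial}(A^+)-|W_{\bar\partial}(A^-)|$, it reduces to $|n-X|\geq|X|$. Hypothesis \eqref{smallflux} implies that the total boundary-to-interior weight is strictly less than $1$, so $|W_{\bar\partial}(A^\pm)|<1/2$ and hence $|X|<1/2$. When $n=0$ the inequality is trivial; when $|n|\geq 1$, the estimate $|n-X|\geq|n|-|X|>1/2>|X|$ closes the argument. The degenerate subcases $A^+=\emptyset$ or $A^-=\emptyset$ are handled in the same way, using $N^\pm\geq 1$ whenever $A^\pm\neq\emptyset$. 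With Hoffman's condition established, a feasible interior flow exists, and combined with the prescribed boundary values $f''=w''$ it provides the desired saturating $\bar X$-flow $f''$.
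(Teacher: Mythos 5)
Your proof is correct, and it takes a genuinely different route from the paper's. The paper argues by induction on the number of interior vertices: it applies max-flow min-cut once, splits the graph along a minimal cut $S$, checks that the two pieces are again $\bar X$-graphs (the crux being that the flux through each new boundary is an integer of absolute value less than $1$, hence zero), and glues the flows supplied by the inductive hypothesis along $S$. You instead fix $f''=w''$ on all boundary-incident edges at the outset, convert saturation into a single transshipment feasibility problem on the interior subgraph, and verify the Gale--Hoffman cut condition $|W_{\bar\partial}(A)|\leq C(A)$ for all interior subsets $A$ in one pass. Both proofs ultimately rest on the same two ingredients --- integrality of interior fluxes from condition (1) of Definition \ref{defbgr}, and the bound $|W_{\bar\partial}(\cdot)|<1/2$ extracted from \eqref{smallflux} together with the balance condition (2), which splits the total boundary-to-interior mass into equal positive and negative halves --- and your closing step (an integer shifted by something of modulus below $1/2$ dominates that something in modulus) is the exact analogue of the paper's ``an integer of modulus less than $1$ is zero.'' What your version buys is the elimination of the induction and of the bookkeeping needed to certify that the cut pieces are again admissible $\bar X$-graphs with the right hypotheses; what it costs is the sign decomposition $A=A^+\sqcup A^-$ and the identity for $C(A)$, which you carry out correctly (the bipartiteness of the nonzero interior edges between $V^+$ and $V^-$, forced by condition (3) and antisymmetry, is exactly the right observation). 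Two minor points to tighten: vertices with $fl(v)=0$ should be assigned arbitrarily to $V^\pm$ and noted to be irrelevant, so the claim $N^\pm\geq 1$ whenever $A^\pm\neq\emptyset$ is not literally needed (your case $n=0$ already covers them, since then $W_{\bar\partial}(A^\pm)=0$ too); and the inequality $|W_{\bar\partial}(A^\pm)|<1/2$ deserves the one explicit line sketched above, since \eqref{smallflux} alone only gives a bound by $1$.
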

\begin{proof}[Proof of Proposition \ref{satbfl}:]
 We proceed by induction on the number of non-boundary vertices $\#(V''\setminus\bar\partial)$. In the case $\#(V''\setminus\bar\partial)=0$ we may take $f''=w''$ and we cnclude.\\

 Supposing that the statement is true when $\#(V''\setminus\bar\partial)<n$ we may prove it fo the case $\#(V''\setminus\bar\partial)=n$ as follows.\\

 Up to reducing to the connected components we may assume that the underlying graph $(V'', E'')$ is connected.\\

 Applying the maxflow-mincut theorem \cite{ff} we obtain the existence of a $\bar X$-flow of maximum value $f$ and of a minimum value cut $S$ saturated by $f$. Then $S$ separates the graph $(V'', E'')$ into two connected components $V_1\supset\bar\partial^+, V_2\supset\bar\partial^-$ which are both $\bar X$-graphs with $\#(V\setminus \bar\partial)<n$, if we take $V_i':=V_i\cup\{\text{ends of edges in }S\}$, $E_i:=[E''\cap(V_i\times V_i)]\cup S$, $w_i:=w''|_{E_i}$ for $i=1,2$
\[
 \bar\partial_1=\bar\partial^+\cup\{\text{ends of edges in }S\}\cap V_2,\quad \bar\partial_2=\bar\partial^-\cup\{\text{ends of edges in }S\}\cap V_1\ .
\]
The properties of a $\bar X$-graph are all easy to verify for the $(V_i, E_i,w,\bar\partial_i)$ except perhaps for property 2. in Definition \ref{defbgr}, i.e. the fact that the total flux through the boundaries are zero. To prove this we use the bound \eqref{smallflux} and the integrality condition in the definition of a $\bar X$-graph. Let $S\pm$ be the set of edges in $S$ for which $f''(a,b)=\pm w''(a,b)$. We have $val(f)<1/2$ as a consequence of the fact that in the original $\bar X$-graph the total flux through $\bar\partial$ was zero and of \eqref{smallflux}. Since $f''$ saturates $S$ we have
\[
1/2>val(S)=\sum_{(a,b)\in S}|w''(a,b)|=val(S^+)+val(S^-)\ .
\]
From the integrality condition 1. in Definition \ref{defbgr} and from the fact that $S$ disconnects the underlying graph $(V'', E'')$ we deduce that the total flux through $\bar\partial_i$ must be an integer for $i=1,2$. Ath the same time the absolute value of this flux is bounded by $val(\bar\partial^+)+val(S)<1$. Therefore it must be zero, and condition 2. in Definition \ref{defbgr} is verified.\\

We may then apply the inductive hypotheses and obtain flows $f_1,f_2$ saturating $\bar\partial_1,\bar\partial_2$. In particular these flows coincide on $S$ and they extend to $\bar X$-flow $f''$ over the initial $\bar X$-graph. The fact that $f''$ saturates $\bar\partial$ follows from the fact that $f_i$ saturates $\bar\partial_i$ for $i=1,2$.
\end{proof}

\begin{proof}[Proof of Proposition \ref{satfl}:]
 We proceed by induction on the number of vertices $\#V$. For $\#V=2$ it suffices to take $f=w$.\\
 
Let now $n>2$, assume that the thesis is true whenever $\#V<n$, and consider the case $\#V=n$. Choose as a sink a vertex $a^+\in V^+$. Consider a $X$-flow $f$ with sink $a^+$ of maximal value. By the maxflow-mincut theorem \cite{ff} there exists a cut $S$ realizing the minimum of possible values of cuts between $\partial$ and $a^+$ and such that $f$ saturates $S$ and the value of $f$ equals the value of $S$.\\
 
 If $val(f)=1$ then $f$ saturates $\partial$ and we conclude. If $val(f)=0$ then $S$ can be taken to be empty, thus $a^+, \partial$ are in different connected components of the $X$-graph. In this case we may remove the connected component of $a^+$ and reduce to the case $\#V<n$. We then conclude by inductive hypothesis.\\
 
 Consider now the remeining case when the value of $f$ is in $]0,1[$. Let $S^\pm$ be the sets of edges in $S$ for which $\op{sgn}(w)=\pm$ and let $s^\pm:=\sum_{S^\pm}|w(e)|$. We then conclude from the definition of an $X$-graph and from the fact that $f$ saturates $S$ that
 \begin{equation}\label{zeroflow}
  s^+-s^-\in\mathbb Z,\quad val(f)=s^++s^-\in]0,1[\ \text{ thus }s^+=s^-\ .
 \end{equation}
We then replace the $X$-graph $(V,E,\partial,w)$ with the $X$-graph $(V',E',\partial,w')$ defined as follows:
\begin{itemize}
 \item $V'\subset V$ consists of all vertices in the connected component of $\partial$ with respect to the cut $S$.
 \item $E'$ consists of all edges in $E\cap (V'\times V')$ and of new edges of the form $(v',\partial), (\partial,v')$ where $(v',x)\in S,v'\in V'$.
 \item $w'$ is defined to be equal to $w$ on $E\cap V'\times V'$, while for $(v',x)\in S,v'\in V'$ we define $w'(v',\partial)=-w'(\partial,v')=w(v',x)$.  
\end{itemize}
We see that the properties as in the definition of an $X$-graph are trivially valid at vertices $v'\in V'\setminus\{\partial\}$ while at $\partial$ they are still valid due to \eqref{zeroflow}.\\

Since $\#V'<\#V$ we may apply the inductive hypothesis to $(V',E',\partial,w')$ and find a $X$-flow $f'$ with sink $b^+\in (V')^+$ which saturates $\partial$. We then extend it to a $X$-flow $\bar f$ on the original $X$-graph $(V, E, \partial,w)$ as follows.\\

Note that we may define a $\bar X$-graph $(V'',E'', w'', \bar\partial)$ by defining
\begin{eqnarray*}
 V''&:=&(V\setminus V')\cup\{\text{ends of edges in }S\}\ ,\\
 E''&:=&\left[E\cap(V''\times V'')\right]\cup S\ ,\\
 w''&:=&w|_{E''}\ ,\\
 \bar \partial&:=&\{\text{ends of edges in }S\}\setminus V'\ .
\end{eqnarray*}
By applying Proposition \ref{satbfl} to this $\bar X$-graph we may find a flow $f''$ on it saturating $\bar\partial$. In particular this flow conicides with $f'$ on $\bar\partial$ and the extension $\bar f$ of $f'$ via $f''$ is well-defined and is an $X$-flow, saturating $\partial$, as desired.
\end{proof}
See Figure \ref{cutfig} for the corresponding picture for vector fields $X$ as in Theorem \ref{minvf}.

 \begin{figure}[h]
\centering{
\resizebox{0.6\textwidth}{!}{\includegraphics{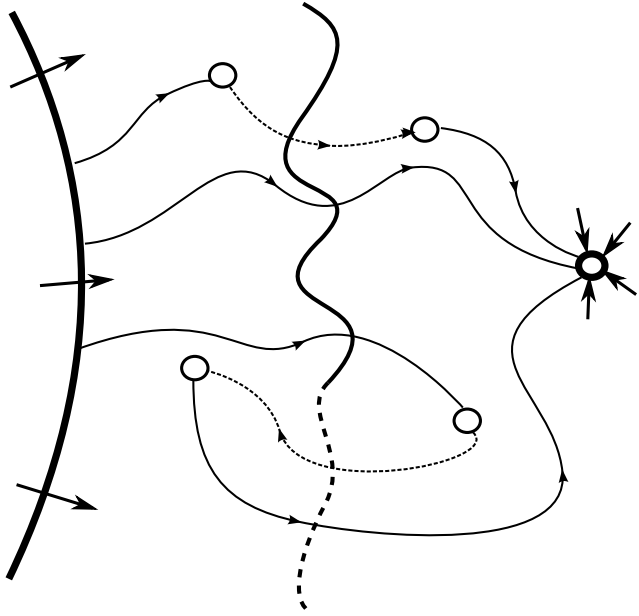}}}
\caption{\emph{We represent a possible situation in the last step of the proof of Proposition \ref{satfl}, seen at the level of the vector fields $X$ as in Theorem \ref{minvf}. On the left we represent the boundary $\partial \mathbb B^5$ and on the right we have a positive charge corresponding to a possible sink of the saturating flow of Proposition \ref{satfl}. Some arcs corresponding to the saturating flow are represented by directed lines. These arcs are obtained as concatenations of arcs corresponding to the original vector field $X$, with the same orientation (continuous parts) or reversed orientation (dashed parts). The continuous vertical wiggly line in the center of the figure represents a minimal cut. We enlarging the cut with the dashed part of that line would make it non-minimal.}}\label{cutfig}
\end{figure}

\subsection{Proof of Theorem \ref{minvf}}
\begin{proof}
We apply Proposition \ref{satfl} to the $X$-graph associated to a vector field $X$ as in Theorem \ref{minvf} and to the associated measure on arcs $\mu$ given by Theorem \ref{smi} applied to $X$.\\

From the flow $f$ as given in Proposition \ref{satfl} if $\alpha=\chi_{\{w\neq 0\}}f/w$ we construct a measure $\mu_\alpha$ as in \eqref{muf}. This measure on arcs gives an $L^1$ vector field $X_\alpha$ which in turn decomposes via Smirnov's Theorem \ref{smi} via a different measure $\mu'$, this time into a superposition of arcs $\gamma$ such that
\begin{eqnarray*}
 s([\gamma])&\in&\partial \mathbb B^5\ ,\\
 e([\gamma])&=&a^+\ ,
\end{eqnarray*}
where the point $a^+$ is the one where the charge corresponding to the sink of the flow $f$ is located.\\

Note that from the remarks at the beginning of Section \ref{secsmi} the lenghts of such arcs give the $L^1$ norm of the vector field $X_\alpha$ and we have
\begin{equation}\label{1}
 \int_{\mathbb B^5}|X_\alpha|=\int\ell(\gamma)d\mu'(\gamma)\le \int\ell(\gamma) d\mu(\gamma)=\int|X|.
\end{equation}
The inequality is due to the fact that since we only decreased the weights of curves the new vector field $X_\alpha$ satisfies pointwise a.e. $x$ the inequality $|X_\alpha|(x) \le |X|(x)$. Note however that while $\op{div}(X_\alpha)=\delta_{a^+}+|\mathbb S^4|^{-1}\mathcal H^4\ll\mathbb S^4$ remains valid in the sense of distributions, we don't have anymore the information that $X_\alpha$ is locally smooth on $\mathbb B^5\setminus\{a^+\}$.\\

For $\mu'$-a.e. $\gamma\in\op{spt}(\mu')$ we have
\[
 \ell(\gamma)\ge|s([\gamma]) - e([\gamma])|\ ,
\]
thus if we denote the arc corresponding to a segment from $a$ to $b$ by $[\gamma]=[a,b]$ then we may write
\begin{eqnarray}
 \int_{\mathbb B^5}|X_\alpha|&=&\int \ell(\gamma)d\mu'(\gamma)\nonumber\\
 &\ge&\int |s([\gamma]) - e([\gamma])|d\mu'(\gamma)\nonumber\\
 &=&\int_{\mathbb S^4}|x-a^+|d\mathcal H^4(x)\ .\label{2}
\end{eqnarray}
Note that
\[
 \left.\frac{\partial}{\partial t}\right|_{t=0}\int_{\mathbb S^4}|x-(a^+ +tb)|d\mathcal H^4(x)=-b\cdot\int_{\mathbb S^4}\frac{x-a^+}{|x-a^+|}d\mathcal H^4(x)\ ,
\]
thus the minimum of \eqref{2} is achieved when $a^+$ satisfies
\[
 \fint_{\mathbb S^4}\frac{x-a^+}{|x-a^+|}=0\ ,
\]
i.e. precisely for $a^+=0$. The thesis of Theorem \ref{minvf} now follows from \eqref{1}, \eqref{2}.
\end{proof}

\section{Proof of Theorem \ref{mainthm}}
\begin{proof}
 We see from \eqref{i1}, \eqref{i2} that for the Poinca\`E-Hodge dual vector field $X$ to $\op{tr}(F\wedge F)$ there holds $\|X\|_{L^1}\leq\|F\|_{L^2}^2$ with equality in the case of the curvature form $F_{rad}$ as in Theorem \ref{mainthm}. From the fact \cite{PR3} that $\mathcal R^\infty(\mathbb B^5)$ is dense in $\mathcal A_{SU(n)}(\mathbb B^5)$ it follows that the infimum of the $L^2$ norm of the curvature given by Theorem \ref{mtpr3} in the case of the boundary trace $A_{\mathbb S^4}$, is equal to the infimum of the same functional over $[A]\in\mathbb R^\infty(\mathbb B^5)$ with trace $\sim A_{\mathbb S^4}$. In particular we have that
 \begin{eqnarray*}
  \int_{\mathbb B^5}|F_{rad}|^2&=&\int_{\mathbb B^5}|X_{rad}|\\
  &=&\inf\left\{\int |X|:\ X\text{ as in Theorem \ref{minvf}}\right\}\\
  &\leq&\inf\left\{\int |F_A|^2:\ [A]\in\mathcal R^\infty(\mathbb B^5),\ i^*_{\partial \mathbb B^5}A\sim A_{\mathbb S^4}\right\}\\
  &=&\min\left\{\int |F_A|^2:\ [A]\in\mathcal A_{SU(n)}^{A_\mathbb S^4}(\mathbb B^5)\right\}\ .
 \end{eqnarray*}
In particular all inequalities above must be equalities and Theorem \ref{mainthm} follows.
\end{proof}

\end{document}